\documentclass[11pt]{article}
\usepackage[margin=1in]{geometry}
\usepackage{amsmath,amssymb,amsthm,mathtools}
\usepackage[T1]{fontenc}
\usepackage{lmodern}
\usepackage{microtype}
\usepackage{hyperref}
\usepackage{enumitem}

\hypersetup{colorlinks=true,linkcolor=blue,citecolor=blue,urlcolor=blue}

\newtheorem{theorem}{Theorem}
\newtheorem{lemma}[theorem]{Lemma}

\newtheorem{remark}[theorem]{Remark}

\newtheorem{claim}[theorem]{Claim}

\newcommand{\AB}{\operatorname{AB}}

\newcommand{\twr}{\operatorname{twr}}

\title{Above and below}
\author{%
  Wenchong Chen\thanks{Department of Mathematics, Nankai University, Tianjin, China and Department of Mathematics, Emory University, Atlanta, USA. Email: \texttt{wenchong.chen@emory.edu}.}
  \and
  Cosmin Pohoata\thanks{Department of Mathematics, Emory University, Atlanta, GA 30322, USA. Email: \texttt{cosmin.pohoata@emory.edu}.}
}
\date{}

\begin{document}
\maketitle

\begin{abstract}
We study a family of above-below Ramsey functions $\operatorname{AB}^{(d)}(k)$ defined for sequences of points in $\mathbb R^d$ whose projections to $\mathbb R^{d-1}$ have cyclic order type. The case $d=3$ is the above-below function $\AB(k)$ that was first introduced by Pohoata and Zakharov in their work on the Erd\H{o}s--Szekeres problem in $\mathbb{R}^{3}$. We prove the sharp estimate
\[
\AB(k)=2^{2^{\Theta(k)}},
\]
and, more generally, show that $\operatorname{AB}^{(d)}(k)$ is closely related to the higher-order cup-cap function of Eli\'a\v{s} and Matou\v{s}ek and the monotone Ramsey numbers of Balko.
\end{abstract}

\section{Introduction}

In a seminal paper \cite{ES35}, Erd\H{o}s and Szekeres initiated one of the central threads of combinatorial geometry by proving that every sufficiently large set of points in the plane in general position contains a large subset in convex position. More generally, for $d\ge 2$, let $ES_d(n)$ denote the smallest integer such that every set of at least $ES_d(n)$ points in $\mathbb R^d$ in general position contains $n$ points in convex position. The existence of $ES_d(n)$ follows from Ramsey's theorem, but the asymptotic behavior of these functions remained mysterious until recent years. 

In the plane, it has been long known since 1935 that $ES_2(n)$ is exponential in $n$, and in a celebrated paper from about ten years ago Suk \cite{Suk17} established that $ES_2(n)=2^{n+o(n)}$. See also \cite{HMPT20} for the best known quantitative bound, as well as a generalization to the pseudoline setting. In higher dimensions, the situation is dramatically different. Answering a question of Erd\H{o}s (see for instance \cite{Bloom}), Pohoata and Zakharov established that $ES_d(n)=2^{o(n)}$ for every fixed $d\ge 3$, by showing that already in $\mathbb R^3$ substantially fewer points are needed to force a convex polytope on $n$ vertices than are required in the plane \cite{PZ}.

One of the ingredients in the proof from \cite{PZ} is a Ramsey-type statement about points in space with convexly ordered planar projections. Let $x_1,\dots,x_N\in\mathbb R^3$ be points in general position whose orthogonal projections onto the first two coordinates $\pi(x_1),\dots,\pi(x_N)\in\mathbb R^2$ are the consecutive vertices of a convex polygon. If $i<i'<j<j'$, then the projected segments $\pi(x_i)\pi(x_j)$ and $\pi(x_{i'})\pi(x_{j'})$ cross in exactly one point. We say that the segment $x_i x_j$ is \emph{above} $x_{i'}x_{j'}$ if, over this crossing point, the point of $x_i x_j$ has larger third coordinate than the corresponding point of $x_{i'}x_{j'}$. Otherwise, we say that $x_i x_j$ is \emph{below} $x_{i'}x_{j'}$.

For $k\ge 4$, let $\AB(k)$ be the smallest integer $N$ such that every such sequence $x_1,\dots,x_N\in\mathbb R^3$ contains indices $1\le i_1<\cdots<i_k\le N$ for which either
\[
x_{i_r}x_{i_t}\text{ is above }x_{i_s}x_{i_u}
\qquad\text{for all }1\le r<s<t<u\le k,
\]
or
\[
x_{i_r}x_{i_t}\text{ is below }x_{i_s}x_{i_u}
\qquad\text{for all }1\le r<s<t<u\le k.
\]
In \cite{PZ}, this function is bounded by a straightforward application of Ramsey's theorem for $4$-uniform Ramsey numbers: $\AB(k)\le R_4(k,k)$. This was sufficient for the qualitative subexponential bound on $ES_3(n)$, but the problem of determining the asymptotic value of $\AB(k)$ as $k \to \infty$ remained an intriguing open question (one which the authors of \cite{PZ} also highlighted).

The main goal of this note is to solve this problem, by establishing the following sharp asymptotic estimate. 

\begin{theorem}\label{thm:main}
There exist absolute constants $c,C>0$ such that for all $k\ge 4$,
\[
2^{2^{ck}}\le \AB(k)\le 2^{2^{Ck}}.
\]
Equivalently,
\[
\AB(k)=2^{2^{\Theta(k)}}.
\]
\end{theorem}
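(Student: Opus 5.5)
The plan is to convert the above/below relation into a sign condition on $4\times 4$ determinants. Then both bounds reduce to the higher-order cup-cap theory of Eli\'a\v{s} and Matou\v{s}ek and the monotone (transitive) Ramsey numbers of Balko.

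\textbf{Step 1: a determinant formula.} Write $x_a=(p_a,z_a)$ with $p_a=\pi(x_a)\in\mathbb R^2$. For $i<i'<j<j'$ I would show that ``$x_ix_j$ is above $x_{i'}x_{j'}$'' is equivalent to a fixed sign of
\[
\chi(i,i',j,j')=\operatorname{sign}\det\begin{pmatrix}1&1&1&1\\ p_i&p_{i'}&p_j&p_{j'}\\ z_i&z_{i'}&z_j&z_{j'}\end{pmatrix}.
\]
This is the orientation of the four points in $\mathbb R^3$. To check it, parametrize the crossing point as $\lambda p_i+(1-\lambda)p_j=\mu p_{i'}+(1-\mu)p_{j'}$. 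Expanding the determinant along the last row then gives a positive multiple, with the multiplier fixed by the convex cyclic order of the $p$'s, of $\lambda z_i+(1-\lambda)z_j-\mu z_{i'}-(1-\mu)z_{j'}$. So the above/below coloring is exactly the chirotope of the sequence restricted to $4$-tuples.

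\textbf{Step 2: upper bound.} For any five indices $a<b<c<d<e$, the Grassmann--Pl\"ucker relations among the five orientations $\chi$ of the $4$-subsets forbid certain sign patterns. I would verify directly, from the three-term relation or from an elementary Radon-type argument in $\mathbb R^3$, that along the lexicographic order of the five $4$-subsets the signs change at most a bounded number of times. With the standard normalization this should be at most once for alternating chirotopes, which is the ``transitive'' or monotone condition in Balko's sense. Balko's theorem for monotone colorings of $4$-tuples, or equivalently the Eli\'a\v{s}--Matou\v{s}ek upper bound for order-$3$ cup-caps, then gives a monochromatic $k$-subset once $N\ge \twr_3(O(k))=2^{2^{O(k)}}$. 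This is the step I expect to be the main obstacle. One must identify precisely which forbidden patterns the orientation condition yields and check that they match the hypothesis of the available monotone Ramsey bound. If the pattern class is slightly larger, I would run the Eli\'a\v{s}--Matou\v{s}ek stepping-down argument directly: reduce to a $3$-uniform transitive coloring via the Erd\H{o}s--Rado-type greedy embedding, costing one exponential, and then use the $2^{O(k)}$ bound for order-$2$ cup-caps.

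\textbf{Step 3: lower bound.} I would place the projections on the parabola, $x_a=(t_a,t_a^2,f(t_a))$ with $t_1<\dots<t_N$. Then the determinant in Step 1 equals the Vandermonde product $\prod_{a<b}(t_b-t_a)>0$ times the third divided difference $f[t_i,t_{i'},t_j,t_{j'}]$. Hence a monochromatic $k$-set is exactly an order-$3$ cup or cap in the Eli\'a\v{s}--Matou\v{s}ek sense for the planar point set $\{(t_a,f(t_a))\}$. Their stepping-up construction provides $2^{2^{ck}}$ such points with no order-$3$ cup or cap of size $k$. After a small perturbation for general position, lifting these points gives the lower bound $\AB(k)\ge 2^{2^{ck}}$.
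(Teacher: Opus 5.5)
Your Step~1 is the paper's Lemma~\ref{lem:color-det} for $d=3$, and your Step~3 is the paper's lower-bound argument: the parabola lift, Claim~\ref{claim:vandermonde-divdiff}, and the Eli\'a\v{s}--Matou\v{s}ek construction. Both are fine.

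The gap is Step~2, which carries the whole upper bound. You need the above--below coloring to be transitive (or $4$-monotone), and you neither prove this nor point to a mechanism that could. The mechanism you suggest fails. For five points $x_0,\dots,x_4$ in general position in $\mathbb R^3$, the orientations of their five $4$-subsets are not constrained by any Grassmann--Pl\"ucker relation. In rank $4$ the three-term relations need six elements, and on five elements every sign assignment is a chirotope. Write $D_j$ for the orientation of the four points other than $x_j$, taken in index order. Then $(D_0,-D_1,D_2,-D_3,D_4)$ spans the line of affine dependences. So acyclicity excludes only the two alternating patterns. For example, whenever the segment $x_0x_4$ pierces the triangle $x_1x_2x_3$, you get the pattern $\pm(+,+,-,+,+)$, which violates transitivity. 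Hence no argument that uses only the chirotope in $\mathbb R^3$ can succeed. Your fallback does not rescue this either: the stepping-down reduction needs a transitivity-type hypothesis, and without one you are left with $R_4(k,k)\le\twr_4(O(k))$, which is the trivial Ramsey bound.

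The missing ingredient is the cyclic order type of the projections, which your Step~2 never uses. Let $Q$ be the $3\times 5$ matrix with columns $(1,\pi(x_a))$. The vector $(D_0,-D_1,D_2,-D_3,D_4)$ lies in $\ker Q$. This kernel is two-dimensional and is spanned by the Cramer vectors (Lemma~\ref{lem:cramer}) of the first four and of the last four columns. Up to alternating signs, the nonzero entries of these vectors are planar orientations $\delta_{a,b}>0$. Comparing coordinates gives $\delta_{0,4}D_j=D_0\,\delta_{j,4}+D_4\,\delta_{0,j}$ for $1\le j\le 3$. This identity alone gives transitivity, and hence $\AB(k)\le R_4^{\mathrm{trans}}(k)\le 2^{2^{O(k)}}$. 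Next, apply the three-term Pl\"ucker relation (Lemma~\ref{lem:Plucker}) to the \emph{projected} five points, where rank is $3$ and such relations do exist. It shows that $\delta_{j,4}/\delta_{0,j}$ is strictly decreasing in $j$. That gives the one-switch property, and then Balko's bound applies. So the ``three-term relation'' you allude to has to be applied to the planar projection, not to the five orientations in $\mathbb R^3$.
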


Theorem~\ref{thm:main} comes from a new connection that we will establish with the work of Eli\'a\v{s} and Matou\v{s}ek \cite{EM}. In their setting, one starts with a planar sequence of points $p_i=(t_i,h_i)$, where $t_1<\cdots<t_N$, and two-colors each ordered $4$-tuple $i<j<k<\ell$ according to the sign of the expression
\begin{equation}\label{eq:third-divdiff-intro}
\Delta_3(p_i,p_j,p_k,p_\ell)
=
\sum_{a\in\{i,j,k,\ell\}} \frac{h_a}{\prod_{b\in\{i,j,k,\ell\}\setminus\{a\}}(t_a-t_b)}.
\end{equation}
This is the so-called third divided difference of $p_i,p_j,p_k,p_{\ell}$, which we will review more closely in Section 2. A subsequence of points is called third-order monotone if all of its $4$-tuples have the same color, and we denote by $\operatorname{EM}^{(3)}(k)$ the smallest integer $N$ such that every planar sequence $p_1,\dots,p_N$, with $p_i=(t_i,h_i)$, and $t_1<\cdots<t_N$, in $3$-general position contains a third-order monotone subsequence of size $k$. In \cite{EM}, Eli\'a\v{s} and Matou\v{s}ek proved that this function is doubly exponential in $k$. Their construction turns out to be the main input behind the lower bound in Theorem~\ref{thm:main}.

The main idea is roughly as follows. If we restrict to points of the form
\[
x_1=(t_1,t_1^2,h_1),\ldots,x_N=(t_N,t_N^2,h_N)\in\mathbb R^3
\]
in the definition of $\AB(k)$, then the projections $\pi(x_i)$ lie on the parabola $y=x^2$. For a quadruple $i<j<k<\ell$, a direct calculation shows that the above--below comparison between the two crossing lifted diagonals $x_i x_k$ and $x_jx_\ell$ is dictated by the sign of a single determinant. A direct calculation then shows that this determinant equals a positive Vandermonde factor times the third divided difference $\Delta_3(p_i,p_j,p_k,p_\ell)$. Consequently, up to a global interchange of the two colors, the above--below coloring in the parabola model is exactly the third-order divided-difference coloring from \eqref{eq:third-divdiff-intro}. 

Before we make things more precise, let us introduce a natural higher-dimensional version of the above-below problem, for which we can establish the connection with the higher-order cup-cap functions of Eli\'a\v{s} and Matou\v{s}ek in much greater generality. 

\bigskip

\noindent\textbf{Higher-dimensional above and below.} Let $d\ge 2$, and write points of $\mathbb R^d$ as $x_i=(z_i,h_i)$, with $z_i\in\mathbb R^{d-1}$ and $h_i\in\mathbb R$. Let
\[
\pi:\mathbb R^d\to\mathbb R^{d-1},
\qquad
\pi(z,h)=z
\]
be the projection onto the first $d-1$ coordinates. We say that a sequence $x_1,\dots,x_N\in\mathbb R^d$ has
\emph{cyclic projections} if the projected sequence
$z_i=\pi(x_i)\in\mathbb R^{d-1}$ has the same order type as points $\gamma(t_i)=(t_i,t_i^2,\dots,t_i^{d-1})$, $t_1<\cdots<t_N$ on the moment curve. For points on the moment curve, it is known that the primitive Radon partitions of the cyclic polytope $C(n,d-1)$ are exactly the alternating partitions along the moment curve (see for example, the work of Breen \cite{Breen73}). In particular, every ordered $(d+1)$-tuple $z_{i_0},z_{i_1},\dots,z_{i_d}$, with $i_0<i_1<\cdots<i_d$ has Radon partition $\{z_{i_j}:j\ \mathrm{even}\}$ versus $\{z_{i_j}:j\ \mathrm{odd}\}$, up to swapping the two parts. Since Radon partitions of minimally dependent subsets are determined by the order type\footnote{This follows from the uniqueness (up to swapping sides) of the Radon partition in Radon's theorem \cite{Radon} for $d+1$ points in general position in $\mathbb{R}^{d-1}$. Indeed, for such a $(d+1)$-tuple in $\mathbb R^{d-1}$, the signs of the coefficients in its affine dependence are determined by the signs of the $d\times d$ minors obtained by deleting one point at a time. These signs are part of the order type. Thus configurations with the same order type have the same Radon partitions on all minimally dependent subsets. See also \cite{Ziegler} for more background on cyclic polytopes.}, the same statement holds for any projected sequence with cyclic order type.

For such a tuple, let $\rho\in\mathbb R^{d-1}$ be the Radon point. After a normalization, this means that there exist positive coefficients $\alpha_j,\beta_j$, such that $\sum_{j\ \mathrm{even}}\alpha_j=1$ and $\sum_{j\ \mathrm{odd}}\beta_j=1$, and such that
\[
\rho
=
\sum_{j\ \mathrm{even}}\alpha_j z_{i_j}
=
\sum_{j\ \mathrm{odd}}\beta_j z_{i_j}.
\]
Writing $x_i=(z_i,h_i)$, define the two lifted heights over $\rho$ by
\[
H_{\mathrm{even}}
=
\sum_{j\ \mathrm{even}}\alpha_j h_{i_j},
\qquad
H_{\mathrm{odd}}
=
\sum_{j\ \mathrm{odd}}\beta_j h_{i_j}.
\]
The $d$-dimensional above--below color of the tuple $i_0<\cdots<i_d$ records whether $H_{\mathrm{even}}>H_{\mathrm{odd}}$, or the reverse.

For $d\ge 2$ and $k\ge d+1$, let $\operatorname{AB}^{(d)}(k)$ be the smallest integer $N$ such that every sequence $x_1,\dots,x_N\in\mathbb R^d$ in general position with cyclic projections contains indices $1\le i_1<\cdots<i_k\le N$ for which all $(d+1)$-tuples receive the same $d$-dimensional above--below color. For $d=3$, note that we indeed have $\operatorname{AB}^{(3)}(k)=\operatorname{AB}(k)$: four projected points in convex position always have Radon partition $\{i_0,i_2\}$ versus $\{i_1,i_3\}$, so the above--below color simply compares the two lifted crossing diagonals.

We also consider the moment-curve restricted version
$\operatorname{AB}^{(d)}_{\mathrm{mc}}(k)$, where one only allows sequences of the
form $x_i=(t_i,t_i^2,\dots,t_i^{d-1},h_i)$, where
$t_1<\cdots<t_N$. Let $\operatorname{EM}^{(d)}(k)$ denote the $d$-th order
Erd\H{o}s--Szekeres function of Eli\'a\v{s} and Matou\v{s}ek, namely the smallest
$N$ forcing a $k$-term subsequence on which all $d$-th divided differences have
the same sign. Finally, let $R^{\mathrm{trans}}_r(k)$ denote the Ramsey number
for transitive two-colorings of ordered $r$-tuples, as introduced by Fox, Pach,
Sudakov, and Suk \cite[Section 6]{FPSS}, and let
$\overline R_{\mathrm{mon}}(k;r)$ denote the corresponding Ramsey number for
$r$-monotone colorings in the sense of Balko \cite{Balko}. We will review the definitions of $\operatorname{EM}^{(d)}(k)$,
$R^{\mathrm{trans}}_r(k)$, and $\overline R_{\mathrm{mon}}(k;r)$ in detail in the next section.

The following theorem describes the general connection between the above-below functions, the work of Eli\'a\v{s} and Matou\v{s}ek, and Balko's monotone Ramsey
numbers.

\begin{theorem}\label{thm:higher-dimensional}
For every fixed $d\ge 2$,
\[
\operatorname{EM}^{(d)}(k)
=
\operatorname{AB}^{(d)}_{\mathrm{mc}}(k)
\le
\operatorname{AB}^{(d)}(k)
\le
\overline R_{\mathrm{mon}}(k;d+1).
\]
\end{theorem}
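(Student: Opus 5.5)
The theorem consists of three comparisons. We prove each separately. The middle inequality $\operatorname{AB}^{(d)}_{\mathrm{mc}}(k)\le\operatorname{AB}^{(d)}(k)$ needs no work beyond the definitions. Sequences $x_i=(t_i,\dots,t_i^{d-1},h_i)$ with $t_1<\cdots<t_N$ have projections on the moment curve, so they trivially have cyclic projections. Hence every such sequence is admissible in the definition of $\operatorname{AB}^{(d)}(k)$, and a bound for all admissible sequences applies in particular to these. We also need general position, or at least that the restricted class is a subclass. We arrange this by requiring the moment-curve model to be in general position, and by noting that a generic perturbation of the heights changes no strict sign.

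For the equality $\operatorname{EM}^{(d)}(k)=\operatorname{AB}^{(d)}_{\mathrm{mc}}(k)$ we show that, on the moment-curve model, the above--below color of each $(d+1)$-tuple agrees with the sign of $\Delta_d$ on $p_a=(t_a,h_a)$, up to a global sign depending only on $d$. Fix $i_0<\dots<i_d$ and write $s_j=t_{i_j}$. The affine dependence $\sum_j\lambda_j(1,\gamma(s_j))=0$ is unique up to scaling. It is $\lambda_j=c/\prod_{l\ne j}(s_j-s_l)$, since these are exactly the coefficients annihilating all polynomials of degree $\le d-1$; this is the divided-difference identity. The sign of $\prod_{l\neq j}(s_j-s_l)$ is $(-1)^{d-j}$, so the signs alternate. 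Normalizing the even and odd parts to sum to $1$ (with a common factor $S=\sum_{j\text{ even}}|\lambda_j|$) gives
\[
H_{\mathrm{even}}-H_{\mathrm{odd}}=\pm\frac1S\sum_j\lambda_jh_{i_j}=\pm\frac{c}{S}\,\Delta_d(p_{i_0},\dots,p_{i_d}),
\]
with the $\pm$ fixed by the parity of $d$. This identifies the two colorings, so monochromatic $k$-subsequences coincide. General position in $\mathbb R^d$ for the lifted points matches $d$-general position of the planar points, since both say that no $d+1$ points have vanishing $\Delta_d$; lower-order degeneracies are handled the same way. Therefore the two extremal functions are equal.

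For the upper bound $\operatorname{AB}^{(d)}(k)\le\overline R_{\mathrm{mon}}(k;d+1)$ we show that the above--below coloring of $(d+1)$-tuples is $(d+1)$-monotone in Balko's sense. That is, for any $d+2$ indices $j_0<\dots<j_{d+1}$, the sequence of colors of the $d+2$ subtuples obtained by deleting $j_m$, read in order of $m$, changes color at most once. Balko's bound then applies directly. The key step is the monotonicity, and it is the main obstacle. Our approach is as follows. The $d+2$ lifted points $x_{j_m}\in\mathbb R^d$ have a unique affine dependence $\mu$, up to scale. We use Cramer/Plücker relations to express the color of the tuple missing $j_m$ as a product of two signs. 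The first is $\mu_m$ times a fixed orientation factor. The second is the sign of the dependence of the projected tuple, which is known and alternating by the cyclic order type; in fact the color equals the sign of the $(d+1)\times(d+1)$ lifted determinant $D_m$ times $(-1)^m$ times a constant. The Grassmann--Plücker relation for $d+2$ vectors in $\mathbb R^{d+1}$ gives $\sum_m(-1)^m D_m\cdot(\text{projected minors})=0$. Combined with the cyclic sign pattern of the projected minors, this implies that the sequence $(-1)^mD_m$ has at most one sign change. Equivalently, $\mu_m$ times the alternating sign changes sign at most once, which is a Descartes-type statement for the $(d+2)$-point dependence on a cyclic configuration. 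If the general argument resists, we would first verify it on the moment-curve model. There $\mu$ is the divided-difference functional and the claim follows from the transitivity of divided-difference signs, as in Eliáš--Matoušek. We would then transfer the argument to arbitrary cyclic projections, using that only the chirotope of the projection enters.
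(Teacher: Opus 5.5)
\textbf{Verdict: genuine gap in the monotonicity step.} Your treatment of the middle inequality and of $\operatorname{EM}^{(d)}(k)=\operatorname{AB}^{(d)}_{\mathrm{mc}}(k)$ is correct. Computing the Radon coefficients directly as the divided-difference weights $1/\prod_{l\ne j}(s_j-s_l)$ is a slightly more direct route than the paper's, which first shows that the color is the sign of the lifted $(d+1)\times(d+1)$ determinant and then evaluates it as a Vandermonde factor times $\Delta_d$.

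The gap is the $(d+1)$-monotonicity, which is all of $\operatorname{AB}^{(d)}(k)\le\overline R_{\mathrm{mon}}(k;d+1)$. You write down a relation and then assert the one-switch conclusion, and the conclusion as written is false. Take $D_m$ to be the lifted determinant of the columns $(1,x_{j_l})$, $l\ne m$, in increasing order; this is what appears in your relation $\sum_m(-1)^mD_m(\cdots)=0$. Then the Cramer vector is $\mu_m=(-1)^mD_m$, and the color of the subtuple missing $j_m$ is $\operatorname{sign}D_m$ (up to the global $(-1)^d$ convention), not $(-1)^m\operatorname{sign}D_m$. You have conflated $\mu_m$ with $D_m$. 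Concretely, for $d=2$ and the points $(t,t^2)$, $t=0,1,2,3$, every triple is a cup and every $D_m$ is a positive Vandermonde determinant. So $(-1)^mD_m$ alternates, with three sign changes. Your ``equivalent'' reformulation, that $(-1)^m\mu_m=D_m$ changes sign at most once, is the true statement. But it is not equivalent to the preceding one, and it is never derived.

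The fallback does not supply it either. Transitivity only forces all $D_m$ to agree when $D_0$ and $D_{d+1}$ agree, and says nothing when they disagree. On the moment curve you would need the identity $\Delta_d(\widehat a)-\Delta_d(\widehat b)=(t_b-t_a)\Delta_{d+1}$, where $\Delta_d(\widehat m)$ omits $p_m$ from the $d+2$ points; it makes $\Delta_d(\widehat m)$ affine in $t_m$. The transfer ``only the chirotope enters'' would in addition need an oriented-matroid argument.

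\textbf{What is missing} is an explicit derivation from the cyclic sign pattern. The paper does it as follows:
\begin{itemize}
\item It uses that $\ker Q$ is two-dimensional.
\item It expands the Cramer vector of $P$ in the basis coming from the first and last $d+1$ columns of $Q$, getting $D_j=(D_0\delta_{j,d+1}+D_{d+1}\delta_{0,j})/\delta_{0,d+1}$.
\item It uses the Pl\"ucker relation to show that $\delta_{j,d+1}/\delta_{0,j}$ is strictly decreasing in $j$.
\end{itemize}

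Your idea of pairing with projected minors can be completed even more simply. Write $w_j=(1,z_j)$. For any $a<b<c$, let $L$ be the remaining $d-1$ indices, and apply $w\mapsto\det(w,w_L)$ (columns of $L$ in increasing order) to the projected relation $\sum_m(-1)^mD_mw_m=0$. Only $m\in\{a,b,c\}$ survive. Sorting the columns gives $D_a\delta_{b,c}-D_b\delta_{a,c}+D_c\delta_{a,b}=0$, with all $\delta>0$ by the cyclic projection condition. Hence $D_b$ is a positive combination of $D_a$ and $D_c$, so no pattern $\pm\mp\pm$ can occur, and $D_0,\dots,D_{d+1}$ change sign at most once.
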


Theorem~\ref{thm:main} is therefore the first nontrivial case of a much broader hierarchy. Indeed, when $d=3$, Theorem~\ref{thm:higher-dimensional} and the lower bound $\operatorname{EM}^{(3)}(k)=2^{2^{\Omega(k)}}$ from
\cite[Theorem 1.5]{EM} give the lower bound in Theorem~\ref{thm:main}. For the upper bound in Theorem~\ref{thm:main}, it would already be enough to know that the above-below coloring is transitive and to use the estimate $R^{\mathrm{trans}}_4(k)\le 2^{2^{O(k)}}$ from \cite[Theorem 1.4]{EM}. However, the last inequality from Theorem~\ref{thm:higher-dimensional} proves a slightly more precise structural statement: the above-below coloring is in fact monotone, which is a much more rigid property. In particular, the inequality $\operatorname{EM}^{(d)}(k) \le \overline R_{\mathrm{mon}}(k;d+1)$ alone (which is trivially implied by Theorem \ref{thm:higher-dimensional}) already significantly refines the estimate $\operatorname{EM}^{(d)}(k)\le R_{d+1}^{\mathrm{trans}}(k)$ of Eli\'a\v{s} and Matou\v{s}ek from \cite{EM}. We will discuss the old connections between these parameters in Section~2, and then prove Theorem~\ref{thm:higher-dimensional} in Section~3.

\section{Preliminaries}

In this section we recall the higher-order cup-cap framework of Eli\'a\v{s} and Matou\v{s}ek \cite{EM}. Throughout, $[N]=\{1,2,\dots,N\}$ is equipped with its natural order. We define the standard tower function recursively as follows:
\[
\twr_1(x)=x,
\qquad
\twr_{r+1}(x)=2^{\twr_r(x)}.
\]

\smallskip

\noindent\textbf{Transitive colorings.} We first review the concept of transitive colorings on ordered tuples. Let $r\ge 2$. A two-coloring $\chi:\binom{[N]}{r}\to\{+,-\}$
is called \emph{transitive} if, for every $i_1<i_2<\cdots<i_{r+1}$, whenever the two consecutive $r$-tuples $\{i_1,\dots,i_r\}$ and $\{i_2,\dots,i_{r+1}\}$ have the same color, then every $r$-subset of $\{i_1,\dots,i_{r+1}\}$ has this same color. Let $R_r^{\mathrm{trans}}(n)$ be the smallest $N$ such that every transitive two-coloring of $\binom{[N]}r$ contains a subset of size $n$ where all the $r$-tuples are monochromatic.

A basic example to keep in mind is the ordinary cups-vs-caps coloring of triples in the plane. Suppose $p_1=(t_1,h_1),\ldots,p_N=(t_N,h_N) \in \mathbb{R}^{2}$ be such that $ t_1<\cdots<t_N$ and color a triple $i<j<k$ according to whether it is a cup or a cap; or, equivalently, according to the sign of
\[
\Delta_2(p_i,p_j,p_k)
=
\frac{\frac{h_k-h_j}{t_k-t_j}-\frac{h_j-h_i}{t_j-t_i}}{t_k-t_i}.
\]
This triple-coloring is transitive. Indeed, if $i<j<k<\ell$ and both consecutive triples $(i,j,k)$ and $(j,k,\ell)$ are cups, then the two adjacent slopes satisfy $\operatorname{slope}(p_ip_j)<\operatorname{slope}(p_jp_k)<\operatorname{slope}(p_kp_\ell)$. Every longer secant slope is a weighted average of the slopes of the shorter segments it spans. Hence
\[
\operatorname{slope}(p_ip_j)<\operatorname{slope}(p_jp_\ell)
\quad\text{and}\quad
\operatorname{slope}(p_ip_k)<\operatorname{slope}(p_kp_\ell),
\]
so the remaining triples $(i,j,\ell)$ and $(i,k,\ell)$ are also cups. The same argument applies if the two consecutive triples are caps. Thus the cups-vs-caps coloring is transitive. Consequently, the Ramsey function  $R_3^{\mathrm{trans}}(n)$ controls the classical cup-cap function of Erd\H{o}s-Szekeres. The key twist is that $R_3^{\mathrm{trans}}(n)$ is in fact comparable in size. In particular, it is also exponential (and, moreover, equal with the cup-cap function from \cite{ES35}). More generally, in \cite[Theorem 1.4]{EM}, Eli\'a\v{s}--Matou\v{s}ek proved for every fixed $r\ge 3$,
\begin{equation} \label{EMblah}
R_r^{\mathrm{trans}}(n)\le \twr_{r-1}(O(n)).
\end{equation}

For every $r \geq 3$, the Ramsey function $R_r^{\mathrm{trans}}(n)$ also controls an appropriate generalization of the cup-cap function. 

\bigskip

\noindent\textbf{Monotone colorings.} We now recall a more restrictive class of
ordered colorings, introduced by Balko \cite{Balko}. Let $r\ge 2$. A two-coloring $\chi:\binom{[N]}{r}\to\{+,-\}$
is called \emph{$r$-monotone} if, for every $i_0<i_1<\cdots<i_r$, the sequence of colors of the $r$-subsets of $\{i_0,\ldots,i_r\}$, taken in
lexicographic order, changes sign at most once. Equivalently, on every $(r+1)$-tuple, the color sequence is either constant or consists of a block of one color followed by a block of the other color.

Let $\overline R_{\mathrm{mon}}(k;r)$ be the least integer $N$ such that every
$r$-monotone coloring of $\binom{[N]}{r}$ contains a $k$-element subset all of
whose $r$-subsets have the same color. Since every $r$-monotone coloring is
transitive, we have $\overline R_{\mathrm{mon}}(k;r)\le R_r^{\mathrm{trans}}(k)$. Indeed, on an ordered $(r+1)$-tuple, the first and last $r$-subsets in
lexicographic order are precisely the two consecutive $r$-tuples. If they have
the same color, then the one-switch condition forces the entire sequence to be
constant.

Balko proved that, for every fixed $r\ge 3$,
\begin{equation} \label{Balko}
\overline R_{\mathrm{mon}}(k;r)=\twr_{r-1}(\Theta(k)).
\end{equation}
This gives the same tower height as the transitive Ramsey upper bound, but for a
smaller and more structured class of colorings. The one-switch condition is also
closely related to the language of consistent sets in the higher Bruhat orders:
the set of $+$-colored $r$-tuples meets every packet of $r$-subsets of an
$(r+1)$-set in an initial or terminal segment. This viewpoint goes back to the work of Manin and Schechtman on higher Bruhat orders and was developed further
by Ziegler in his study of cyclic hyperplane arrangements
\cite{ManinSchechtman,ZieglerHBO}.

\bigskip

\noindent\textbf{Higher-order monotonicity.} Let $P=\left\{p_1,\dots,p_N\right\}$ be a set of points with $p_i=(t_i,h_i)\in\mathbb R^2$, and $t_1<\cdots<t_N$. For an ordered tuple $p_{i_0},\dots,p_{i_d}$, with $i_0<\cdots<i_d$, we write $\Delta_q(p_{i_0},\dots,p_{i_d})$
for the $d$-th divided difference of the values $h_i$. It is defined recursively by $\Delta_0(p_i):=h_i$, and, for $d\ge 1$,
\[
\Delta_d(p_{i_0},\dots,p_{i_d})
:=
\frac{
\Delta_{d-1}(p_{i_1},\dots,p_{i_d})
-
\Delta_{d-1}(p_{i_0},\dots,p_{i_{d-1}})
}
{t_{i_d}-t_{i_0}}.
\]
Equivalently,
\begin{equation}\label{eq:Delta-closed}
\Delta_d(p_{i_0},\dots,p_{i_d})
=
\sum_{j=0}^{d}
\frac{h_{i_j}}
{\prod_{r\ne j}(t_{i_j}-t_{i_r})}.
\end{equation}
Formula~\eqref{eq:Delta-closed} shows that $\Delta_d$ is symmetric in the $d+1$ points. As we will see later in the proof of Theorem \ref{thm:higher-dimensional}, a useful way to remember $\Delta_d$ is through interpolation: if $f$ is the unique polynomial of degree at most $d$ passing through $p_{i_0},\dots,p_{i_d}$, then $\Delta_d(p_{i_0},\dots,p_{i_d})$ is the coefficient of $x^d$ in $f$. In particular, the tuple lies on the graph of a polynomial of degree at most $d-1$ if and only if its $d$-th divided difference vanishes.

Following \cite{EM}, we say that $P \subset \mathbb{R}^{2}$ is in \emph{$d$-general position} if no $d+1$ points of $P$ lie on the graph of a polynomial of degree at most $d-1$, equivalently if every $(d+1)$-tuple has nonzero $d$-th divided difference. A subsequence is called \emph{$d$-th order monotone} if all of its $(d+1)$-tuples have the same sign of $\Delta_d$. As in Section 1, we denote by $\operatorname{EM}^{(d)}(k)$ the least $N$ such that every $N$-term planar sequence of points in $d$-general position contains a $d$-th order monotone subsequence of size $k$. Thus first-order monotonicity is ordinary monotonicity, and second-order monotonicity is the cup/cap condition from \cite{ES35}. The first genuinely new case is $d=3$. 

As already alluded above, the central structural fact is that these sign colorings are transitive \cite[Lemma 2.5]{EM}. This means that $\operatorname{EM}^{(d)}(k)\le R_{d+1}^{\mathrm{trans}}(k)$, and so \eqref{EMblah} gives $\operatorname{EM}^{(d)}(k)\le \twr_d(O(k))$ for every fixed $d\ge 2$. For $d=3$, Eli\'a\v{s}--Matou\v{s}ek \cite[Theorem 1.5]{EM} also showed that there is an absolute constant $c>0$ such that
\begin{equation} \label{EMconstr}
\operatorname{EM}^{(3)}(k)\ge 2^{2^{ck}}
\end{equation}
for all $k\ge 1$. Together with the upper bound $\operatorname{EM}^{(3)}(k) \le R_4^{\mathrm{trans}}(k)\le 2^{2^{O(k)}}$, this allowed Eli\'a\v{s}--Matou\v{s}ek to show that $\operatorname{EM}^{(3)}(k)=2^{2^{\Theta(k)}}$. 

We are using \eqref{EMconstr} as a black box in this paper, but as a final point it is perhaps worth highlighting why parabolas naturally appear in their construction. Eli\'a\v{s} and Matou\v{s}ek build the lower-bound example for \eqref{EMconstr} recursively. At each step, every point of the previous configuration is replaced by a tiny affine copy of that configuration, and these copies are then bent upward by adding a very steep quadratic function. This ``cluster plus parabola'' construction forces the sign of every third divided difference to be governed by the combinatorial type of the quadruple, which prevents large third-order monotone subsequences. After $m$ steps, the resulting examples have size $2^{2^{m-1}}$ and contain no third-order monotone subsequence of size $2m+1$.

\section{Proof of Theorem~\ref{thm:higher-dimensional}}

For convenience, for a $(d+1)$-sequence $x_0,\ldots,x_d\in\mathbb R^d$ with
cyclic projections, we say its above--below color is \emph{positive} if
$(-1)^d(H_{\mathrm{even}}-H_{\mathrm{odd}})>0$, and \emph{negative} otherwise.

The next lemma gives a more practical way to think about the above--below
coloring.

\begin{lemma}\label{lem:color-det}
For a sequence $x_0,\ldots,x_d\in\mathbb R^d$ with cyclic projections, its
above--below color is positive if and only if
\[
\det
\begin{pmatrix}
1&1&\cdots&1\\
x_0&x_1&\cdots&x_d
\end{pmatrix}
>0.
\]
\end{lemma}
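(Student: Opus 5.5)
The plan is to view both sides of the claimed equivalence as linear functions of the heights $h_0,\dots,h_d$, with the projections $z_0,\dots,z_d$ fixed, and to show they are positive multiples of each other. Write $x_j=(z_j,h_j)$ and let $D$ denote the determinant in the statement. Expanding $D$ along its last row (the heights) gives
\[
D=\sum_{j=0}^d \epsilon_j\, h_j\, M_j ,
\]
where $M_j$ is the $d\times d$ minor of the matrix with rows $1$ and $z$, with column $j$ deleted, and $\epsilon_j=\pm1$ is the usual cofactor sign. The vector $(\epsilon_jM_j)_j$ is orthogonal to every row of that $d\times(d+1)$ matrix, because adding a copy of such a row as the last row makes the determinant vanish. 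So $(\epsilon_j M_j)_j$ is an affine dependence of $z_0,\dots,z_d$: its entries sum to zero and $\sum_j \epsilon_j M_j z_j=0$. In general position the affine dependence is unique up to scaling. By the Radon description recalled in Section~1, we can normalize it as $\lambda_j=\alpha_j$ for $j$ even and $\lambda_j=-\beta_j$ for $j$ odd. This is consistent, since both families of weights sum to $1$, which gives $\sum\lambda_j=0$.

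Hence there is a nonzero scalar $\kappa$, depending only on $z_0,\dots,z_d$, with
\[
D=\kappa\sum_j\lambda_j h_j=\kappa\,(H_{\mathrm{even}}-H_{\mathrm{odd}}).
\]
It remains to show that $\operatorname{sign}\kappa=(-1)^d$, and that this sign is the same for every configuration with cyclic projections.

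First, the sign is determined by the order type. Compare the coefficients of $h_0$ on both sides: the left side has $\epsilon_0M_0$ and the right side has $\kappa\alpha_0$ with $\alpha_0>0$. So $\operatorname{sign}\kappa=\epsilon_0\operatorname{sign}M_0$. This is the orientation of a $d$-subset of the projections, which is part of the order type. Since all cyclic-projection sequences share the order type of the moment curve, it suffices to compute $\kappa$ in one example.

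Take $z_j=\gamma(t_j)$ with $t_0<\dots<t_d$, and test heights $h_j=t_j^d$.
\begin{itemize}[leftmargin=*]
\item The determinant $D$ becomes the Vandermonde determinant, which is positive.
\item The weights $w_j=1/\prod_{r\ne j}(t_j-t_r)$ from \eqref{eq:Delta-closed} annihilate all polynomials of degree at most $d-1$. So they form an affine dependence of the $\gamma(t_j)$, and $\lambda$ is a scalar multiple of $w$.
\item The sign of $w_j$ is $(-1)^{d-j}$. Since even-indexed $\lambda_j$ are positive, we get $\lambda=c(-1)^d w$ with $c>0$.
\item Then $H_{\mathrm{even}}-H_{\mathrm{odd}}=\sum_j\lambda_j t_j^d=c(-1)^d\Delta_d=c(-1)^d$, because the $d$-th divided difference of $t^d$ is $1$ (it is the leading coefficient of the interpolant).
\end{itemize}
Comparing the two signs gives $\operatorname{sign}\kappa=(-1)^d$. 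Therefore $D>0$ if and only if $(-1)^d(H_{\mathrm{even}}-H_{\mathrm{odd}})>0$, which is exactly the definition of a positive color. Both sides are nonzero by general position.

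The main obstacle is the sign bookkeeping: keeping the cofactor signs $\epsilon_j$, the alternating signs of the $w_j$, and the normalization of $\lambda$ consistent. The moment-curve test computation is designed to avoid tracking the $\epsilon_j$ explicitly.
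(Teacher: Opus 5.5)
Your proof is correct. It follows a slightly different route from the paper's, mainly in how the sign is fixed.

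\textbf{What is shared.} Both arguments rest on the same fact: the Radon weights span the one-dimensional space of affine dependences of $z_0,\dots,z_d$. Hence $D$ is a fixed nonzero multiple of $H_{\mathrm{even}}-H_{\mathrm{odd}}$.

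\textbf{How the paper gets the identity.} The paper uses a single column operation. In your notation, it replaces the first column by $\alpha_0^{-1}\sum_j\lambda_j(1,z_j,h_j)^T$. This column equals $(0,\dots,0,(H_{\mathrm{even}}-H_{\mathrm{odd}})/\alpha_0)^T$. Expanding along it gives the explicit identity
\[
D=(-1)^d\alpha_0^{-1}(H_{\mathrm{even}}-H_{\mathrm{odd}})M_0 .
\]
You instead expand along the height row and identify the cofactor vector with the dependence by a kernel-dimension argument. This is equivalent.

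\textbf{Where you diverge.} You calibrate the sign of $\kappa$ with a moment-curve test case, $h_j=t_j^d$. That case uses the divided-difference weights and the Vandermonde determinant, which is essentially the paper's later Claim. The calibration is valid, because $\operatorname{sign}\kappa=\epsilon_0\operatorname{sign}M_0$ is an order-type invariant.

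\textbf{The detour is unnecessary.} Your own relation $\epsilon_0M_0=\kappa\alpha_0$ already finishes the proof:
\begin{itemize}
\item $\epsilon_0=(-1)^{(d+1)+1}=(-1)^d$ is just the cofactor sign of the bottom-left entry.
\item $M_0=\det\begin{pmatrix}1&\cdots&1\\ z_1&\cdots&z_d\end{pmatrix}>0$ by the cyclic orientation convention.
\end{itemize}
Together these give $\kappa=(-1)^dM_0/\alpha_0$, exactly the paper's constant.

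\textbf{What each route buys.} Your calibration gives an independent check that the $(-1)^d$ in the definition of a positive color matches the divided-difference sign in the moment-curve model. The paper's route is shorter and gives the constant explicitly.
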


\begin{proof}
Write $x_i=(z_i,h_i)$, where $z_i\in\mathbb R^{d-1}$ and $h_i\in\mathbb R$.
Since the projections have cyclic order type, the tuple $z_0,\ldots,z_d$ has
the alternating Radon partition. Let $\rho$ be the Radon point, and write
\[
\rho=\sum_{j\text{ even}}\lambda_jz_j
=
\sum_{j\text{ odd}}\lambda_jz_j,
\qquad
\sum_{j\text{ even}}\lambda_j=
\sum_{j\text{ odd}}\lambda_j=1,
\]
with all $\lambda_j>0$. Then
\[
\sum_{j=0}^d(-1)^j\lambda_j=0,
\qquad
\sum_{j=0}^d(-1)^j\lambda_jz_j=0,
\qquad
\sum_{j=0}^d(-1)^j\lambda_jh_j=H_{\mathrm{even}}-H_{\mathrm{odd}}.
\]
Let
\[
D=
\det
\begin{pmatrix}
1&1&\cdots&1\\
z_0&z_1&\cdots&z_d\\
h_0&h_1&\cdots&h_d
\end{pmatrix}.
\]
By replacing the first column by the linear combination
\[
\frac{1}{\lambda_0}\sum_{j=0}^d(-1)^j\lambda_j
\begin{pmatrix}1\\ z_j\\ h_j\end{pmatrix},
\]
which does not change the determinant except for the factor $1/\lambda_0$, we get
\[
D=
\frac{(-1)^d}{\lambda_0}(H_{\mathrm{even}}-H_{\mathrm{odd}})
\det
\begin{pmatrix}
1&\cdots&1\\
z_1&\cdots&z_d
\end{pmatrix}.
\]
The last determinant is positive by the cyclic projection convention. Hence
$D>0$ if and only if $(-1)^d(H_{\mathrm{even}}-H_{\mathrm{odd}})>0$.
\end{proof}

We also need the following elementary linear algebra facts.

\begin{lemma}\label{lem:cramer}
Let $P$ be an $n\times(n+1)$ matrix. For $0\le j\le n$, let $P_j$ denote the
$n\times n$ matrix obtained by deleting the $(j+1)$-st column of $P$. Then
\[
\bigl(\det P_0,-\det P_1,\ldots,(-1)^n\det P_n\bigr)^T\in \ker P.
\]
\end{lemma}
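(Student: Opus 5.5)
The plan is to show $Pv=0$ for $v=\bigl(\det P_0,-\det P_1,\ldots,(-1)^n\det P_n\bigr)^T$ one row at a time, using Laplace expansion. Fix a row index $i\in\{1,\dots,n\}$ and let $Q^{(i)}$ be the $(n+1)\times(n+1)$ matrix obtained by stacking the $i$-th row of $P$ on top of $P$. Since $Q^{(i)}$ has two equal rows, $\det Q^{(i)}=0$.

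Next, expand $\det Q^{(i)}$ along its first row, whose entries are $P_{i,1},\dots,P_{i,n+1}$. The minor obtained by deleting the first row and the $(j+1)$-st column of $Q^{(i)}$ is exactly $P_j$. The corresponding cofactor sign is $(-1)^{1+(j+1)}=(-1)^j$. This gives
\[
0=\det Q^{(i)}=\sum_{j=0}^{n}(-1)^j P_{i,j+1}\det P_j=(Pv)_i .
\]
Since $i$ was arbitrary, $Pv=0$, that is, $v\in\ker P$.

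The only point that needs care is the sign bookkeeping. The column being deleted is the $(j+1)$-st, with $j$ starting at $0$, and the duplicated row sits in position $1$. Together these give the sign $(-1)^j$, which matches the alternating signs in the claimed vector. Nothing else in the argument is an obstacle; it is the standard generalized Cramer's rule.
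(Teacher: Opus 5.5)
Your proof is correct and is essentially the same as the paper's: stack the $i$-th row of $P$ on top of $P$, note that the resulting determinant vanishes, and expand along the first row to read off $(Pv)_i=0$. Your sign bookkeeping, $(-1)^{1+(j+1)}=(-1)^j$, is right and matches the alternating signs in the claimed vector.
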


\begin{proof}
If $y_i^T$ denotes the $i$-th row of $P$, then expanding the determinant of the
matrix obtained from $P$ by repeating the row $y_i^T$ at the top gives
\[
0=
\det
\begin{pmatrix}
y_i^T\\ P
\end{pmatrix}
=
y_i^T\cdot
\bigl(\det P_0,-\det P_1,\ldots,(-1)^n\det P_n\bigr)^T.
\]
This holds for every row $y_i^T$, so the vector lies in $\ker P$.
\end{proof}

The next lemma is a high-dimensional generalization of the classical theorem of Ptolemy from Euclidean geometry.

\begin{lemma}\label{lem:Plucker}
Let \(Q\) be an \(n\times(n+2)\) matrix, with columns $v_0,v_1,\ldots,v_{n+1}$, where \(n\geq 2\). For \(0\leq i<j\leq n+1\), let $\delta_{i,j}
  =
  \det(v_0,\ldots,\widehat v_i,\ldots,\widehat v_j,\ldots,v_{n+1})$, where the remaining \(n\) columns are kept in their original order. Then, for every $0\leq i_1<i_2<i_3<i_4\leq n+1$, we have
\begin{equation}\label{eq:plucker}
  \delta_{i_1,i_2}\delta_{i_3,i_4}
  -
  \delta_{i_1,i_3}\delta_{i_2,i_4}
  +
  \delta_{i_1,i_4}\delta_{i_2,i_3}
  =
  0.
\end{equation}
\end{lemma}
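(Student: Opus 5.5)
We reduce the identity to the classical three-term Grassmann--Plücker relation for $2\times 2$ minors. Fix $0\le i_1<i_2<i_3<i_4\le n+1$ and let $S$ be the set of the remaining $n-2$ column indices. All six determinants $\delta_{a,b}$ with $a,b\in\{i_1,i_2,i_3,i_4\}$ share the columns $v_s$, $s\in S$. First we may assume these $n-2$ columns are linearly independent; otherwise every $\delta_{a,b}$ in \eqref{eq:plucker} vanishes and there is nothing to prove. (Alternatively, both sides are polynomial in the entries of $Q$, so it suffices to prove the identity on the Zariski-dense set where these columns are independent.)

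Choose an invertible $n\times n$ matrix $g$ that sends the columns $v_s$, $s\in S$, in increasing order, to the last $n-2$ standard basis vectors $e_3,\dots,e_n$. Replacing $Q$ by $gQ$ multiplies every $\delta_{a,b}$ by $\det g$. Since each term of \eqref{eq:plucker} is quadratic in the $\delta$'s, the whole left-hand side scales by $(\det g)^2$. So it suffices to treat this normalized case.

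In the normalized case we compute $\delta_{a,b}$ for $\{a,b,c,d\}=\{i_1,i_2,i_3,i_4\}$ with $c<d$ the two kept indices among the four. The matrix consists of $v_c,v_d$ together with $e_3,\dots,e_n$, placed in their original column order. Expanding along the identity block gives
\[
\delta_{a,b}=\varepsilon_{c,d}\,\bigl(v_c^{(1)}v_d^{(2)}-v_c^{(2)}v_d^{(1)}\bigr)=:\varepsilon_{c,d}\,m_{c,d},
\]
where $\varepsilon_{c,d}=\pm1$ is the sign of the permutation that moves $v_c,v_d$ to the front while keeping their relative order. That sign is $(-1)^{\#\{s\in S:s<c\}+\#\{s\in S:s<d\}}$.

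The main obstacle is the sign bookkeeping. The $2\times2$ minors satisfy the standard identity $m_{12}m_{34}-m_{13}m_{24}+m_{14}m_{23}=0$ (indices $1,2,3,4$ standing for $i_1,\dots,i_4$), which can be checked directly by expanding. The pairs in \eqref{eq:plucker} are complementary: $\delta_{i_1,i_2}\delta_{i_3,i_4}=\varepsilon_{34}\varepsilon_{12}m_{34}m_{12}$, and similarly for the other two terms. One then checks that the three products $\varepsilon_{12}\varepsilon_{34}$, $\varepsilon_{13}\varepsilon_{24}$, $\varepsilon_{14}\varepsilon_{23}$ are all equal. Each is $(-1)^{\sum_{t=1}^4\#\{s\in S:s<i_t\}}$, because each index $i_t$ appears exactly once in each product. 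Hence the left side of \eqref{eq:plucker} equals a common sign times the $2\times2$ Plücker relation, which is $0$.
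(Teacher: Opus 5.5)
Your proof is correct, but it takes a different route from the paper's.

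The paper first reduces to $(i_1,i_2,i_3,i_4)=(0,1,2,3)$ by permuting columns. It then proves that case for all $n$ at once with a $2n\times 2n$ block matrix $M$. This matrix has $v_0,\dots,v_3$ in both row blocks and $v_4,\dots,v_{n+1}$ in the top-left and bottom-right corners. Laplace expansion along the first $n$ rows gives $\det M=2(\delta_{0,1}\delta_{2,3}-\delta_{0,2}\delta_{1,3}+\delta_{0,3}\delta_{1,2})$. Elementary row and column operations show $\det M=0$.

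You instead use $\operatorname{GL}_n$-equivariance: each $\delta_{a,b}$ scales by $\det g$ and the relation is homogeneous quadratic. This lets you normalize the $n-2$ shared columns to $e_3,\dots,e_n$, so every $\delta$ becomes $\pm$ a $2\times 2$ minor. The statement then reduces directly to the three-term relation for $\operatorname{Gr}(2,4)$.

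What your approach buys:
\begin{itemize}
\item It makes transparent that only the two-dimensional quotient by the shared columns matters.
\item It handles arbitrary $i_1<i_2<i_3<i_4$ directly. Your sign count supplies exactly the justification that the paper's one-line remark (``the signs are preserved by this relabeling'') leaves implicit: each $i_t$ appears exactly once in each complementary product, so all three terms carry the common sign $(-1)^{\sum_t\#\{s\in S:s<i_t\}}$.
\end{itemize}
The cost is a case split on whether the shared columns are linearly dependent, or alternatively a density argument.

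The paper's argument is a single uniform determinant computation with no case distinction. It leaves the Laplace-sign and relabeling-sign bookkeeping to the reader, though. It also yields $2\cdot(\ldots)=0$, so it implicitly divides by $2$; this is harmless over $\mathbb R$.
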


In modern language, Lemma \ref{lem:Plucker} can be regarded as special case of the usual Pl\"ucker relations for maximal
minors. A good reference is, for instance, Bruns and Vetter~\cite[Lemma 4.4]{BrunsVetter}. For the sake of self-containment, we include a direct proof for this particular case. 

\begin{proof}
It is enough to prove the case $(i_1,i_2,i_3,i_4)=(0,1,2,3)$, since the general case follows by permuting the columns of \(Q\). The signs in
\eqref{eq:plucker} are preserved by this relabeling because the complementary
minors are always taken with the induced column order.

Consider the following \(2n\times 2n\) matrix:
\[
M=
\begin{pmatrix}
v_4&\cdots&v_{n+1}&v_0&v_1&v_2&v_3&0&\cdots&0\\
0&\cdots&0&v_0&v_1&v_2&v_3&v_4&\cdots&v_{n+1}
\end{pmatrix}.
\]
Here the first and last blocks each have \(n-2\) columns; when \(n=2\), these
blocks are empty.

We first compute \(\det M\) by Laplace expansion along the first \(n\) rows. A
nonzero term must choose the first \(n-2\) columns, together with exactly two of
the four middle columns \(v_0,v_1,v_2,v_3\). The complementary minor from the
last \(n\) rows then contains the other two middle columns, together with the
last \(n-2\) columns. Collecting the six possible choices, and keeping track of
the Laplace signs, gives
\[
  \det M
  =
  2\bigl(
  \delta_{0,1}\delta_{2,3}
  -
  \delta_{0,2}\delta_{1,3}
  +
  \delta_{0,3}\delta_{1,2}
  \bigr).
\]

On the other hand, add each of the last \(n-2\) columns of \(M\) to the
corresponding one of the first \(n-2\) columns. Then replace each of the last
\(n\) rows by itself minus the corresponding one of the first \(n\) rows. These
elementary operations do not change the determinant. After them, the lower-left
\(n\times(n+2)\) block is zero, so the last \(n\) rows have nonzero entries only
in the last \(n-2\) columns. Hence the resulting matrix has rank strictly less
than \(2n\), and therefore $\det M=0$. This proves the case \((0,1,2,3)\), and hence the lemma.
\end{proof}

\begin{remark}
For $n=2$, Lemma~\ref{lem:Plucker} is also the standard Pl\"ucker relation for the Grassmannian
\(\operatorname{Gr}(2,4)\):
\[
\det(v_0,v_2)\det(v_1,v_3)
=
\det(v_0,v_1)\det(v_2,v_3)
+
\det(v_0,v_3)\det(v_1,v_2).
\]
If \(A_j=e^{i\theta_j}\) are four cyclically ordered points on the unit circle and $v_j=\bigl(\cos(\theta_j/2),\sin(\theta_j/2)\bigr)$, then
\[
  \det(v_i,v_j)
  =
  \sin\left(\frac{\theta_j-\theta_i}{2}\right)
  =
  \frac12 |A_iA_j|
  \qquad \text{for all}\ 0 \leq i<j \leq 3.
\]
Thus the above Pl\"ucker relation becomes the classical Ptolemy's theorem
\[
  |A_0A_2|\,|A_1A_3|
  =
  |A_0A_1|\,|A_2A_3|
  +
  |A_0A_3|\,|A_1A_2|.
\]
\end{remark}

We are now ready to complete the proof of Theorem \ref{thm:higher-dimensional}.

\begin{proof}[Proof of Theorem~\ref{thm:higher-dimensional}]
We first prove that the above--below coloring is $(d+1)$-monotone. It is enough to prove the one-switch property on every ordered $(d+2)$-tuple. Let $x_0,x_1,\ldots,x_{d+1}\in\mathbb R^d$ have cyclic projections, and write $x_i=(z_i,h_i)$, where
$z_i\in\mathbb R^{d-1}$. Put
\[
P=
\begin{pmatrix}
1&1&\cdots&1\\
x_0&x_1&\cdots&x_{d+1}
\end{pmatrix},
\qquad
Q=
\begin{pmatrix}
1&1&\cdots&1\\
z_0&z_1&\cdots&z_{d+1}
\end{pmatrix}.
\]
Thus $P$ is a $(d+1)\times(d+2)$ matrix and $Q$ is the $d\times(d+2)$ matrix
obtained from $P$ by deleting the last row.

For $0\le j\le d+1$, let $D_j$ be the determinant of the
$(d+1)\times(d+1)$ matrix obtained from $P$ by deleting the $(j+1)$-st column.
By Lemma~\ref{lem:color-det}, the above--below color of the $(d+1)$-tuple $\{x_0,\ldots,x_{d+1}\}\setminus\{x_j\}$ is the sign of $D_j$.

For $0\le a<b\le d+1$, let $\delta_{a,b}$ be the determinant of the
$d\times d$ matrix obtained from $Q$ by deleting the $(a+1)$-st and $(b+1)$-st
columns. By the cyclic projection condition, and with the same orientation
convention as above, all these minors are positive: $\delta_{a,b}>0$.

By Lemma~\ref{lem:cramer}, applied to $P$, the vector $\vec v_0=(D_0,-D_1,\ldots,(-1)^dD_d,(-1)^{d+1}D_{d+1})^T$ lies in $\ker P$, and hence also in $\ker Q$. Applying Lemma~\ref{lem:cramer}
to the first $d+1$ columns of $Q$ and to the last $d+1$ columns of $Q$, and
then extending by a zero coordinate, gives
\[
\vec v_1=(\delta_{0,d+1},-\delta_{1,d+1},\ldots,(-1)^d\delta_{d,d+1},0)^T,\ \ \vec v_2=(0,-\delta_{0,1},\delta_{0,2},\ldots,(-1)^{d+1}\delta_{0,d+1})^T
\in\ker Q.
\]
Since $Q$ has rank $d$, its kernel is two-dimensional, and $\vec v_1,\vec v_2$
form a basis. Hence $\vec v_0=\alpha\vec v_1+\beta\vec v_2$ for some real
numbers $\alpha,\beta$. Comparing the first and last coordinates gives
\[
\alpha=\frac{D_0}{\delta_{0,d+1}},
\qquad
\beta=\frac{D_{d+1}}{\delta_{0,d+1}}.
\]
Therefore, for every $1\le j\le d$,
\begin{equation}\label{eq:Dj-monotone-combination}
D_j
=
\frac{D_0\,\delta_{j,d+1}+D_{d+1}\,\delta_{0,j}}
     {\delta_{0,d+1}}.
\end{equation}

We now use the Pl\"ucker relation from Lemma \ref{lem:Plucker} for $(i_1,i_2,i_3,i_4)=(0,j,j+1,d+1)$. We get
\begin{equation}\label{eq:plucker-monotone-ratio}
\delta_{0,j+1}\delta_{j,d+1}
=
\delta_{0,j}\delta_{j+1,d+1}
+
\delta_{0,d+1}\delta_{j,j+1},
\qquad 1\le j\le d-1.
\end{equation}
Since every term in \eqref{eq:plucker-monotone-ratio} is positive, note that this implies that the ratios
\[
\rho_j:=\frac{\delta_{j,d+1}}{\delta_{0,j}},
\qquad 1\le j\le d,
\]
are strictly decreasing. This can be leveraged as follows. If $D_0$ and $D_{d+1}$ have the same sign, then
\eqref{eq:Dj-monotone-combination} implies that every $D_j$, $1\le j\le d$,
has this same sign. Suppose instead that $D_0$ and $D_{d+1}$ have opposite
signs. If $D_0>0$ and $D_{d+1}<0$, then
\eqref{eq:Dj-monotone-combination} gives
\[
D_j>0
\quad\Longleftrightarrow\quad
\rho_j>-\frac{D_{d+1}}{D_0}.
\]
However, we know that $\rho_1>\rho_2>\cdots>\rho_d$, thereby the signs of $D_0,D_1,\ldots,D_d,D_{d+1}$ change at most once. The case $D_0<0$ and $D_{d+1}>0$ is identical, with the
inequality reversed.

Thus the signs of the $D_j$'s form a one-switch sequence in deletion order.
The lexicographic order on the $(d+1)$-subsets of
$\{0,1,\ldots,d+1\}$ is the reverse of the deletion order
$j=0,1,\ldots,d+1$, and reversing a sequence preserves the property of having
at most one sign change. Hence the above--below coloring is
$(d+1)$-monotone. Consequently, $\operatorname{AB}^{(d)}(k)\le \overline R_{\mathrm{mon}}(k;d+1)$.

It remains to prove the moment-curve identity $\operatorname{AB}^{(d)}_{\mathrm{mc}}(k)=\operatorname{EM}^{(d)}(k)$. The main idea is the following generalized Vandermonde identity.

\begin{claim}\label{claim:vandermonde-divdiff}
Let $p_0=(t_0,h_0),\dots,p_d=(t_d,h_d)$, where $t_0<\cdots<t_d$. Then
\[
\det
\begin{pmatrix}
1&1&\cdots&1\\
t_0&t_1&\cdots&t_d\\
t_0^2&t_1^2&\cdots&t_d^2\\
\vdots&\vdots&&\vdots\\
t_0^{d-1}&t_1^{d-1}&\cdots&t_d^{d-1}\\
h_0&h_1&\cdots&h_d
\end{pmatrix}
=
\left(\prod_{0\le i<j\le d}(t_j-t_i)\right)
\Delta_d(p_0,\dots,p_d).
\]
\end{claim}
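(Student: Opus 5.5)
The plan is to expand the determinant along its last row (the row of heights $h_0,\dots,h_d$) and identify each cofactor as a Vandermonde determinant. Call the matrix $M$. Deleting the last row and the $(j+1)$-st column of $M$ leaves the $d\times d$ Vandermonde matrix in the nodes $t_0,\dots,\widehat{t_j},\dots,t_d$ (rows $1,t,\dots,t^{d-1}$). Its determinant is
\[
V_j:=\prod_{\substack{0\le a<b\le d\\ a,b\neq j}}(t_b-t_a).
\]
The entry $h_j$ sits in position $(d+1,j+1)$, so its cofactor sign is $(-1)^{(d+1)+(j+1)}=(-1)^{d+j}$. This gives
\[
\det M=\sum_{j=0}^d (-1)^{d+j}\,h_j\,V_j .
\]

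Next, set $V:=\prod_{0\le a<b\le d}(t_b-t_a)$ and compute the ratio $V_j/V$. The factors of $V$ that are missing from $V_j$ are exactly those involving the index $j$:
\[
\prod_{a<j}(t_j-t_a)\prod_{b>j}(t_b-t_j)=(-1)^{d-j}\prod_{r\neq j}(t_j-t_r),
\]
since each of the $d-j$ factors with $b>j$ flips sign. Hence
\[
(-1)^{d+j}V_j=(-1)^{d+j}(-1)^{d-j}\,\frac{V}{\prod_{r\ne j}(t_j-t_r)}=\frac{V}{\prod_{r\ne j}(t_j-t_r)},
\]
because $(-1)^{2d}=1$. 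Substituting this into the expansion gives
\[
\det M=V\sum_{j}\frac{h_j}{\prod_{r\neq j}(t_j-t_r)}=V\,\Delta_d(p_0,\dots,p_d)
\]
by the closed formula \eqref{eq:Delta-closed}, which is the claim.

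The only delicate step is the sign bookkeeping in the second paragraph: the cofactor sign and the reordering of the missing Vandermonde factors must cancel exactly. As an independent check I would use the interpolation viewpoint. Subtracting from the last row the combination $\sum_{m<d}c_m(\text{row }m)$, where $f(x)=\sum_m c_m x^m$ is the interpolating polynomial, turns the last row into $(\Delta_d\,t_j^d)_j$. The determinant is then $\Delta_d$ times the full $(d+1)\times(d+1)$ Vandermonde determinant, which equals $\Delta_d V$ and confirms the sign.
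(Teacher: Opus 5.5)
Your proof is correct. Your main argument differs from the paper's, and your ``independent check'' is exactly the paper's proof. The paper writes the last row as $\sum_m a_m(t_0^m,\dots,t_d^m)$ via the interpolating polynomial. It then row-reduces to $a_d$ times the full $(d+1)\times(d+1)$ Vandermonde determinant and uses $a_d=\Delta_d$ from Lagrange interpolation.

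Your main argument instead expands along the $h$-row, recognizes each cofactor as a $d\times d$ Vandermonde $V_j$, and matches the result term by term with \eqref{eq:Delta-closed}. Your sign bookkeeping $(-1)^{d+j}(-1)^{d-j}=1$ is right.

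The paper's route avoids all sign tracking and stresses the reading of $\Delta_d$ as a leading coefficient. Yours needs the sign check but uses the closed formula directly. Both ultimately rest on the same input, since the fact $a_d=\Delta_d$ is just \eqref{eq:Delta-closed} read off from the Lagrange formula.

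Your expansion has one small extra payoff. By Lemma~\ref{lem:cramer}, the vector $((-1)^jV_j)_j$ spans the one-dimensional kernel of the Vandermonde part, so the positive cofactors $V_j$ are proportional to the Radon weights $\lambda_j$. Your formula $\det M=(-1)^d\sum_j(-1)^jV_jh_j$ then shows at once that $(-1)^d(H_{\mathrm{even}}-H_{\mathrm{odd}})$ is a positive multiple of $\Delta_d$ in the moment-curve model. In effect, your route proves this claim and Lemma~\ref{lem:color-det} simultaneously in that case.
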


While this is likely fairly standard as well, we would like to include an argument which we think best explains the connection between higher order divided differences and cyclic projections (this is also the starting point of this paper, in some sense). Let $f(x)=a_0+a_1x+\cdots+a_dx^d$ be the unique polynomial of degree at most $d$ satisfying $f(t_i)=h_i$ for all
$i=0,\dots,d$. Using the Lagrange interpolation formula, it is easy to see that
the leading coefficient $a_d$ is precisely the divided difference $a_d=\Delta_d(p_0,\dots,p_d)$. On the other hand, in the determinant on the left hand side of
Claim~\ref{claim:vandermonde-divdiff}, the last row is
\[
(h_0,\dots,h_d)
=
a_0(1,\dots,1)+a_1(t_0,\dots,t_d)+\cdots+a_d(t_0^d,\dots,t_d^d).
\]
Subtracting from the last row the corresponding linear combination of the
previous rows leaves $a_d(t_0^d,\dots,t_d^d)$ as the last row. Therefore the
determinant becomes
\[
a_d
\det
\begin{pmatrix}
1&1&\cdots&1\\
t_0&t_1&\cdots&t_d\\
t_0^2&t_1^2&\cdots&t_d^2\\
\vdots&\vdots&&\vdots\\
t_0^{d-1}&t_1^{d-1}&\cdots&t_d^{d-1}\\
t_0^d&t_1^d&\cdots&t_d^d
\end{pmatrix}.
\]
The remaining determinant is the usual Vandermonde determinant,
$\prod_{0\le i<j\le d}(t_j-t_i)$, and since
$a_d=\Delta_d(p_0,\dots,p_d)$, this proves Claim~\ref{claim:vandermonde-divdiff}.

Since the Vandermonde factor $\prod_{0\le i<j\le d}(t_j-t_i)$ is positive,
Lemma~\ref{lem:color-det} shows that the above--below color of
$x_0,\ldots,x_d$ in the moment-curve model is determined, up to the fixed sign
convention in the definition of positivity, by the sign of
$\Delta_d(p_0,\ldots,p_d)$, and vice versa. Hence monochromatic subsequences in
the moment-curve model are exactly $d$-th order monotone subsequences. Thus $\operatorname{AB}^{(d)}_{\mathrm{mc}}(k)=\operatorname{EM}^{(d)}(k)$. Since the moment-curve model is a special case of cyclic projections, we also
have $\operatorname{AB}^{(d)}_{\mathrm{mc}}(k)\le \operatorname{AB}^{(d)}(k)$. Together with the monotone-coloring bound proved above, this completes the proof of Theorem~\ref{thm:higher-dimensional}.
\end{proof}

\subsection*{Acknowledgments}
  CP would like to thank Zichao Dong for useful discussions during his recent visit at Emory (in particular, for explaining to him the Eli\'a\v{s}--Matou\v{s}ek construction). 

We would also like to acknowledge the role of AI in preparing and enriching this manuscript. The upper bound $\operatorname{AB}^{(d)}(k)
\le \overline R_{\mathrm{mon}}(k;d+1)$ from Theorem \ref{thm:higher-dimensional} is entirely due to ChatGPT. This research was supported by NSF grant DMS-2246659.

\end{document}